\documentclass[12pt]{amsart}
\usepackage{latexsym,amssymb,amsmath,amsthm,amscd,enumerate,graphicx,
longtable}

\textwidth=13.5cm
\textheight=23cm
\parindent=16pt
\oddsidemargin=-0.5cm
\evensidemargin=-0.5cm
\topmargin=-0.5cm

\makeatletter
\@namedef{subjclassname@2010}{%
  \textup{2010} Mathematics Subject Classification}
\makeatother

\newtheorem{thm}{Theorem}[section]
\newtheorem{cor}[thm]{Corollary}
\newtheorem{lem}[thm]{Lemma}
\newtheorem{prop}[thm]{Proposition}

\theoremstyle{definition}


\numberwithin{equation}{section}


\newcommand{\cD}{{\mathcal D}}

\newcommand{\cF}{{\mathcal F}}

\newcommand{\cR}{{\mathcal R}}

\newcommand{\cdq}{{\mathcal D\!Q}}
\newcommand{\cdr}{{\mathcal D\!R}}

\newcommand{\R}{{\mathbb R}}

\newcommand{\Z}{{\mathbb Z}}

\def\al{\alpha}
\def\bt{\beta}

\def\sg{\sigma}

\def\om{\omega}

\def\0{\emptyset}
\def\1{\textbf{\rm 1}}
\def\6{\partial}
\def\8{\infty}

\def\lt{\left}
\def\rt{\right}

\def\ol{\overline}

\def\ch{\operatorname{ch}}

\begin{document}

\title{The $n$-linear embedding theorem for dyadic rectangles}

\author[H.~Tanaka]{Hitoshi~Tanaka}
\address{
Research and Support Center on Higher Education for the hearing and Visually Impaired, 
National University Corporation Tsukuba University of Technology, 
Kasuga 4-12-7, 
Tsukuba City 305-8521, 
Japan
}
\email{htanaka@k.tsukuba-tech.ac.jp}
\author[K.~Yabuta]{K\^{o}z\^{o}~Yabuta}
\address{
Research center for Mathematical Sciences,
Kwansei Gakuin University,
Gakuen 2-1, 
Sanda 669-1337, 
Japan
}
\email{kyabuta3@kwansei.ac.jp}
\thanks{
The first author is supported by 
Grant-in-Aid for Scientific Research (C) (15K04918), 
the Japan Society for the Promotion of Science. 
The second author was supported partly by 
Grant-in-Aid for Scientific Research (C) Nr.~23540228, 
the Japan Society for the Promotion of Science. 
}

\keywords{
multilinear strong fractional integral operator;
multilinear strong positive dyadic operator;
$n$-linear embedding theorem;
reverse doubling weight.
}
\subjclass[2010]{42B25,\,42B35.}
\date{}

\begin{abstract}
Let $\sg_i$, $i=1,\ldots,n$, denote 
reverse doubling weights on $\R^d$, 
let $\cdr(\R^d)$ denote the set of all dyadic rectangles on $\R^d$ 
(Cartesian products of usual dyadic intervals) 
and let $K:\,\cdr(\R^d)\to[0,\8)$ be a~map. 
In this paper we give the $n$-linear embedding theorem for dyadic rectangles. 
That is, we prove 
the $n$-linear embedding inequality for dyadic rectangles
\[
\sum_{R\in\cdr(\R^d)}
K(R)\prod_{i=1}^n\lt|\int_{R}f_i\,{\rm d}\sg_i\rt|
\le C
\prod_{i=1}^n
\|f_i\|_{L^{p_i}(\sg_i)}
\]
can be characterized by simple testing condition 
\[
K(R)\prod_{i=1}^n\sg_i(R)
\le C
\prod_{i=1}^n\sg_i(R)^{\frac{1}{p_i}}
\quad R\in\cdr(\R^d),
\]
in the range $1<p_i<\8$ and 
$\sum_{i=1}^n\frac{1}{p_i}>1$.
As a~corollary to this theorem, 
for reverse doubling weights, 
we verify a~necessary and sufficient condition 
for which the weighted norm inequality for the multilinear strong positive dyadic operator and for strong fractional integral operator to hold.
\end{abstract}

\maketitle

\section{Introduction}\label{sec1}
The purpose of this paper is to prove the $n$-linear embedding theorem for dyadic rectangles. 
We will denote by $\cdq(\R^d)$ the family of all dyadic cubes 
$Q=2^{-k}(m+[0,1)^d)$, 
$k\in\Z,\,m\in\Z^d$. 
We will denote by $\cdr(\R^d)$ the family of all dyadic rectangles on $\R^d$. 
The dyadic rectangle we mean that the Cartesian product of the dyadic intervals $\cdq(\R)$. 
Throughout this paper 
$n$ stands for an~integer which is greater than one. 

Through a series of works 
\cite{HHL,Hy,LSU,NTV,Ta1,Ta2,Ta3,Tr},
one perfectly characterizes 
the $n$-linear embedding inequality for dyadic cubes. 
Let $\sg_i$, $i=1,\ldots,n$, denote 
positive Borel measures on $\R^d$ and 
let $K:\,\cdq(\R^d)\to[0,\8)$ be a~map. 
The $n$-linear embedding inequality for dyadic cubes 
\begin{equation}\label{1.1}
\sum_{Q\in\cdq(\R^d)}
K(Q)\prod_{i=1}^n\lt|\int_{Q}f_i\,{\rm d}\sg_i\rt|
\le C
\prod_{i=1}^n
\|f_i\|_{L^{p_i}({\rm d}\sg_i)}
\end{equation}
can be characterized in the full range 
$1<p_i<\8$. 
The $n$-linear embedding theorem \eqref{1.1}, 
either can be reduced to the (localized) $(n-1)$-linear embedding theorems, 
or characterized by certain $n$-weight discrete Wolff potential conditions. 
The division line is whether 
the exponents $p_1,\ldots,p_n$ 
are in the super-dual range 
$\sum_{i=1}^n\frac{1}{p_i}\ge 1$ 
or in the strictly sub-dual range  
$\sum_{i=1}^n\frac{1}{p_i}<1$. 
The inner workings of each range seem to be rather different 
(see \cite{Ta3}). 
The main technique used to is that of 
\lq\lq parallel corona" decomposition 
from the work of Lacey et al. \cite{LSSU} 
on the two-weight boundedness of the Hilbert transform. 
However, this powerful technique deeply depends on the structure of dyadic cubes 
and can not apply dyadic rectangles. 
It is natural to consider what happens in the case $\cdr(\R^d)$ 
and the partial answer is given in this paper. 

By weights we will always mean 
nonnegative, 
locally integrable functions 
which are positive on a set of positive measure. 
Given a~measurable set $E$ and a~weight $\om$, 
we will use 
$\om(E)$ to denote $\int_{E}\om\,{\rm d}x$. 
By $1_{E}$ we stand for the characteristic function of $E$. 

Let $1\le p<\8$ and $\om$ be a~weight. 
We define the weighted Lebesgue space $L^p(\om)$ to be a~Banach space equipped with the norm 
\[
\|f\|_{L^p(\om)}
=
\lt(\int_{\R^d}|f|^p\,{\rm d}\om\rt)^{\frac{1}{p}},
\]
where we have used ${\rm d}\om:=\om\,{\rm d}x$. 
Given $1<p<\8$, $p'=\frac{p}{p-1}$ 
will denote the conjugate exponent of $p$. 

Let $\cR(\R^d)$ denote the set of all rectangles in $\R^d$ with sides parallel to the coordinate axes. 
We say that a~weight $\om$ is \lq\lq reverse doubling weight" 
if it satisfies that 
there is a~constant $\bt>1$ such that 
$\bt\om(R')\le\om(R)$ 
for any $R',R\in\cR(\R^d)$ where 
$R'$ is the two equal division of $R$. 
We shall prove the following theorem. 

\begin{thm}\label{thm1.1}
Let $1<p_i<\8$ and 
$\sum_{i=1}^n\frac{1}{p_i}>1$.
Let $K:\,\cdr(\R^d)\to[0,\8)$ be a~map 
and let $\sg_i$, $i=1,\ldots,n$, be reverse doubling weights on $\R^d$. 
The following statements are equivalent\text{:}

\begin{itemize}
\item[{\rm(a)}] 
The $n$-linear embedding inequality for dyadic rectangles 
\begin{equation}\label{1.2}
\sum_{R\in\cdr(\R^d)}
K(R)\prod_{i=1}^n\lt|\int_{R}f_i\,{\rm d}\sg_i\rt|
\le c_1
\prod_{i=1}^n
\|f_i\|_{L^{p_i}(\sg_i)}
\end{equation}
holds for all 
$f_i\in L^{p_i}(\sg_i)$, 
$i=1,\ldots,n$;
\item[{\rm(b)}] 
The testing condition 
\begin{equation}\label{1.3}
K(R)\prod_{i=1}^n\sg_i(R)
\le c_2
\prod_{i=1}^n\sg_i(R)^{\frac{1}{p_i}}
\end{equation}
holds for all dyadic rectangles $R\in\cdr(\R^d)$.
\end{itemize}

\noindent
Moreover, 
the least possible constants $c_1$ and $c_2$ are equivalent.
\end{thm}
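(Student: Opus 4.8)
The plan is to prove the two directions separately, with the implication $(a)\Rightarrow(b)$ being the routine one and $(b)\Rightarrow(a)$ carrying the real content. For $(a)\Rightarrow(b)$ I would fix a dyadic rectangle $R$ and test \eqref{1.2} on the functions $f_i=1_R$. Then $\int_R f_i\,{\rm d}\sg_i=\sg_i(R)$ and $\|f_i\|_{L^{p_i}(\sg_i)}=\sg_i(R)^{1/p_i}$, and since the left-hand side of \eqref{1.2} dominates the single term $K(R)\prod_i\sg_i(R)$, inequality \eqref{1.3} follows with $c_2\le c_1$.

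The substance is $(b)\Rightarrow(a)$. Here the obvious difficulty is that dyadic rectangles are \emph{not} nested in the way dyadic cubes are, so the usual stopping-time / corona machinery is unavailable; this is exactly the obstruction the introduction flags. My strategy is to exploit the hypothesis $\sum_i 1/p_i>1$ to avoid any stopping time altogether and instead run a direct argument via a ``multi-parameter'' maximal function and a discretization of the level sets. Concretely, for a weight $\sg_i$ let $M_{\sg_i}^{\cdr}g(x)=\sup_{R\ni x}\frac{1}{\sg_i(R)}\int_R|g|\,{\rm d}\sg_i$ be the strong dyadic maximal operator with respect to $\sg_i$; by the Jessen--Marcinkiewicz--Zygmund iteration of the one-dimensional dyadic maximal theorem, $M_{\sg_i}^{\cdr}$ is bounded on $L^{r}(\sg_i)$ for every $r>1$ (with constant depending on $r$ and $d$ only). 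The point of reverse doubling is twofold: it makes the collection of rectangles ``sparse enough'' that summing over $R$ with a geometric decay in $\sg_i(R)$ converges, and it lets one pass from the rectangle $R$ itself to a slightly larger ``halo'' on which the averages are comparable, which substitutes for the pairwise-disjointness one would get from a stopping time.

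The key steps, in order, would be: (1) Normalize $c_2=1$ and write $a_R^{(i)}=\frac{1}{\sg_i(R)}\int_R f_i\,{\rm d}\sg_i$ for the $\sg_i$-average, so that the left side of \eqref{1.2} equals $\sum_R K(R)\prod_i|a_R^{(i)}|\sg_i(R)$, which by \eqref{1.3} is bounded by $\sum_R \prod_i\bigl(|a_R^{(i)}|\,\sg_i(R)^{1/p_i}\bigr)$. (2) Since $|a_R^{(i)}|\le M_{\sg_i}^{\cdr}f_i(x)$ for every $x\in R$, replace $\sg_i(R)^{1/p_i}$ by $\int_R (M_{\sg_i}^{\cdr}f_i)^{?}$-type quantities; more precisely, choose exponents $q_i$ with $\sum_i 1/q_i=1$ and $q_i>p_i$ (possible precisely because $\sum_i 1/p_i>1$), write $\sg_i(R)^{1/p_i}=\sg_i(R)^{1/q_i}\sg_i(R)^{1/p_i-1/q_i}$, and use $|a_R^{(i)}|\,\sg_i(R)^{1/p_i-1/q_i}\lesssim \bigl(\int_R (M_{\sg_i}^{\cdr}f_i)^{s_i}\,{\rm d}\sg_i\bigr)^{1/t_i}\sg_i(R)^{\text{(something)}}$ after an application of H\"older's inequality on $R$, engineering the powers so the residual power of $\sg_i(R)$ is strictly positive. (3) Apply the $n$-fold H\"older inequality with exponents $q_i$ to the sum over $R$, getting $\prod_i\bigl(\sum_R (\cdots)\bigr)^{1/q_i}$. (4) In each factor, sum first over all $R$ of a fixed ``generation'' (a fixed size in each coordinate) using that the rectangles of that generation tile $\R^d$ and the integrand is a genuine integral of $(M_{\sg_i}^{\cdr}f_i)^{s_i}$, then sum over generations using the strictly positive residual power of $\sg_i(R)$ together with reverse doubling to produce a convergent geometric series. (5) Collect: each factor is bounded by $c\,\|M_{\sg_i}^{\cdr}f_i\|_{L^{p_i}(\sg_i)}^{?}\lesssim c\,\|f_i\|_{L^{p_i}(\sg_i)}$ by the $L^{p_i}(\sg_i)$-boundedness of $M_{\sg_i}^{\cdr}$, yielding \eqref{1.2} with $c_1\lesssim c_2$ and hence the claimed equivalence of the least constants.

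The main obstacle I anticipate is step (4): making the double summation (over generations, and over rectangles within a generation) rigorous and convergent without a stopping-time structure. This is where reverse doubling must be used in an essential, quantitative way — one needs that $\sg_i(R)$ decays geometrically as $R$ shrinks in \emph{any} single coordinate direction, and one must set up the bookkeeping of the $2d$-parameter scale index (one halving parameter per coordinate) so that the geometric series over all of $\Z^d$ (scales) converges after H\"older. A secondary technical point is choosing the auxiliary exponents $q_i,s_i,t_i$ so that simultaneously (i) $\sum 1/q_i=1$, (ii) $q_i>p_i$, (iii) the residual powers of $\sg_i(R)$ are positive, and (iv) $s_i<p_i$ so that $M_{\sg_i}^{\cdr}$ is bounded in the relevant space — a short but fiddly linear-algebra computation in the exponents, feasible exactly because of the strict inequality $\sum_i 1/p_i>1$.
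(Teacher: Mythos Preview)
Your opening moves are exactly the paper's: testing on $f_i=1_R$ gives $(a)\Rightarrow(b)$ with $c_2\le c_1$, and for $(b)\Rightarrow(a)$ you correctly pick $q_i>p_i$ with $\sum_i 1/q_i=1$ and apply H\"older to reduce to bounding, for each $i$, the single-weight quantity
\[
S_i:=\sum_{R\in\cdr(\R^d)}\sg_i(R)^{q_i/p_i}\Bigl(\tfrac{1}{\sg_i(R)}\int_R|f_i|\,{\rm d}\sg_i\Bigr)^{q_i}
\]
by $C\|f_i\|_{L^{p_i}(\sg_i)}^{q_i}$. This is precisely the multi-parameter Carleson embedding that the paper isolates as Lemma~2.2. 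The paper proves it by \emph{induction on the dimension}: one checks that the $d$-dimensional testing condition (2.6) passes both to the $(d-1)$-dimensional weight $v_{I_d}(\ol{x})=\int_{I_d}\sg(\ol{x},x_d)\,{\rm d}x_d$ and (via a Lebesgue-point argument) to the one-dimensional slice $v_{\ol{x}}(x_d)=\sg(\ol{x},x_d)$; then one iterates, using Minkowski's inequality to interchange the $\ell^{q/p}$ sum in $I_d$ with the $L^1(\R^{d-1})$ integral, and closes with the one-dimensional Lemma~2.1 (principal cubes). Reverse doubling is used \emph{only} to verify the testing hypothesis (2.6); the embedding lemma itself is proved for any weight satisfying (2.6).

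Your alternative route to $S_i$ via the strong maximal operator and a scale-by-scale geometric series has two genuine gaps. First, the Jessen--Marcinkiewicz--Zygmund iteration proves $L^r$-boundedness of the strong maximal operator for \emph{Lebesgue} (or product) measure; it does not transfer to $M_{\sg_i}^{\cdr}$ because $\sg_i(I_1\times\cdots\times I_d)$ does not factor, so there is no clean one-variable operator to iterate. Boundedness of $M^{\cdr}_{\sg}$ on $L^p(\sg)$ for a general (even reverse doubling) weight is not a known soft fact and is essentially as hard as the Carleson embedding you are trying to prove. Second, the ``residual positive power of $\sg_i(R)$ plus geometric series over generations'' scheme cannot be arranged: if in step~(2) you H\"older on $R$ with any exponent $s_i<p_i$ (necessary so that $(M^{\cdr}_{\sg_i}f_i)^{s_i}$ is integrable), the leftover exponent on $\sg_i(R)$ is $q_i/p_i-q_i/s_i<0$, and reverse doubling gives only an \emph{upper} bound on $\sg_i(R)$, hence a \emph{lower} bound on its negative powers --- the series over scales diverges rather than converges. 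Moreover, $\sg_i(R)$ depends on the location of $R$, not just on its scale vector in $\Z^d$, so there is no single geometric ratio to sum even after localisation. In short, steps~(2) and~(4) cannot be completed as written; what is missing is exactly the inductive Carleson embedding (the paper's Lemma~2.2), and once you have that, the maximal-function detour is unnecessary.
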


\begin{cor}\label{cor1.2}
Let $1<p_i<\8$, $1<q<\8$ and 
$\sum_{i=1}^n\frac{1}{p_i}>\frac1q$. 
Let $K:\,\cdr(\R^d)\to[0,\8)$ be a~map and 
let $\sg_i$, $i=1,\ldots,n$, 
and $\om$ be reverse doubling weights on $\R^d$. 
The following statements are equivalent\text{:}

\begin{itemize}
\item[{\rm(a)}] 
The weighted norm inequality for multilinear strong positive operator 
\begin{equation}\label{1.4}
\|T_{K}(f_1,\ldots,f_n)\|_{L^q(\om)}
\le c_1
\prod_{i=1}^n
\|f_i\|_{L^{p_i}(\sg_i^{1-p_i})}
\end{equation}
holds for all 
$f_i\in L^{p_i}(\sg_i^{1-p_i})$, 
$i=1,\ldots,n$;
Here, 
\[
T_{K}(f_1,\ldots,f_n)
:=
\sum_{R\in\cdr(\R^d)}
K(R)1_{R}
\prod_{i=1}^n\int_{R}f_i\,{\rm d}x.
\]
\item[{\rm(b)}] 
The testing condition 
\begin{equation}\label{1.5}
K(R)\om(R)^{\frac1q}
\prod_{i=1}^n\sg_i(R)
\le c_2
\prod_{i=1}^n\sg_i(R)^{\frac{1}{p_i}}
\end{equation}
holds for all dyadic rectangles $R\in\cdr(\R^d)$.
\end{itemize}

\noindent
Moreover,
the least possible constants $c_1$ and $c_2$ are equivalent.
\end{cor}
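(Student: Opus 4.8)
The plan is to derive Corollary~\ref{cor1.2} from Theorem~\ref{thm1.1}; the point is that, after dualizing the $L^q(\om)$ norm, the inequality \eqref{1.4} turns into an $(n+1)$-linear embedding inequality of exactly the type governed by Theorem~\ref{thm1.1}, the extra weight being $\om$ and the extra exponent being $q'$. First I would reduce to nonnegative data. Since $\big|\int_R f_i\,{\rm d}x\big|\le\int_R|f_i|\,{\rm d}x$ for every $R\in\cdr(\R^d)$, we have the pointwise bound $|T_{K}(f_1,\ldots,f_n)|\le T_{K}(|f_1|,\ldots,|f_n|)$, while $\||f_i|\|_{L^{p_i}(\sg_i^{1-p_i})}=\|f_i\|_{L^{p_i}(\sg_i^{1-p_i})}$; hence it suffices to treat \eqref{1.4} for $f_i\ge0$. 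In that case $T_{K}(f_1,\ldots,f_n)\ge0$, and since $1<q<\8$,
\[
\|T_{K}(f_1,\ldots,f_n)\|_{L^q(\om)}
=\sup\lt\{\int_{\R^d}T_{K}(f_1,\ldots,f_n)\,h\,{\rm d}\om\ :\ h\ge0,\ \|h\|_{L^{q'}(\om)}\le1\rt\}.
\]
Expanding $T_{K}$ and interchanging sum and integral (all terms being nonnegative), $\int_{\R^d}T_{K}(f_1,\ldots,f_n)\,h\,{\rm d}\om=\sum_{R}K(R)\big(\int_R h\,{\rm d}\om\big)\prod_{i=1}^n\int_R f_i\,{\rm d}x$, so \eqref{1.4} for $f_i\ge0$ is equivalent to
\[
\sum_{R\in\cdr(\R^d)}K(R)\lt(\int_R h\,{\rm d}\om\rt)\prod_{i=1}^n\int_R f_i\,{\rm d}x
\le c_1\,\|h\|_{L^{q'}(\om)}\prod_{i=1}^n\|f_i\|_{L^{p_i}(\sg_i^{1-p_i})}
\]
for all $f_i,h\ge0$, with the same constant $c_1$.

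Next I would substitute $f_i=g_i\sg_i$ (so $g_i=f_i/\sg_i$ on $\{\sg_i>0\}$, which carries all the relevant $f_i$ since $f_i\in L^{p_i}(\sg_i^{1-p_i})$ forces $f_i=0$ a.e.\ on $\{\sg_i=0\}$), under which $\int_R f_i\,{\rm d}x=\int_R g_i\,{\rm d}\sg_i$ and $\|f_i\|_{L^{p_i}(\sg_i^{1-p_i})}=\|g_i\|_{L^{p_i}(\sg_i)}$. The inequality just displayed then reads
\[
\sum_{R\in\cdr(\R^d)}K(R)\lt(\int_R h\,{\rm d}\om\rt)\prod_{i=1}^n\int_R g_i\,{\rm d}\sg_i
\le c_1\,\|h\|_{L^{q'}(\om)}\prod_{i=1}^n\|g_i\|_{L^{p_i}(\sg_i)},
\]
which is precisely the $(n+1)$-linear embedding inequality \eqref{1.2} for the map $K$, the reverse doubling weights $\sg_1,\ldots,\sg_n,\om$, the functions $g_1,\ldots,g_n,h$, and the exponents $p_1,\ldots,p_n,q'$ (the absolute values present in \eqref{1.2} being restored, exactly as above, via $\big|\int_R g_i\,{\rm d}\sg_i\big|\le\int_R|g_i|\,{\rm d}\sg_i$). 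The admissibility hypothesis of Theorem~\ref{thm1.1} here, namely $\sum_{i=1}^n\frac1{p_i}+\frac1{q'}>1$, is exactly the hypothesis $\sum_{i=1}^n\frac1{p_i}>\frac1q$.

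Then, applying Theorem~\ref{thm1.1} (in both directions), this $(n+1)$-linear inequality is equivalent, with comparable least constants, to the testing condition
\[
K(R)\lt(\prod_{i=1}^n\sg_i(R)\rt)\om(R)
\le c_2\lt(\prod_{i=1}^n\sg_i(R)^{\frac1{p_i}}\rt)\om(R)^{\frac1{q'}}
\qquad R\in\cdr(\R^d);
\]
dividing by $\om(R)^{1/q'}$ and using $1-\frac1{q'}=\frac1q$ turns this into \eqref{1.5}, and the least constants remain comparable throughout, which gives the final assertion of the corollary.

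I do not expect a genuine obstacle: the argument is essentially bookkeeping, and its only real content is the observation that the dual form of \eqref{1.4} is an instance of \eqref{1.2} with one additional factor. The points deserving a little care are the two passages to nonnegative functions (which make the duality identity and the substitution $f_i=g_i\sg_i$ unambiguous, in particular on the $\sg_i$-null sets) and the verification that the exponent relation transfers correctly when the number of weights increases from $n$ to $n+1$.
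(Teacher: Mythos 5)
Your proposal is correct and follows essentially the same route as the paper: dualize the $L^q(\om)$ norm, rewrite $f_i=g_i\sg_i$ so that the dual form of \eqref{1.4} becomes the $(n+1)$-linear embedding inequality \eqref{1.2} with weights $\sg_1,\ldots,\sg_n,\om$ and exponents $p_1,\ldots,p_n,q'$, and apply Theorem~\ref{thm1.1}, noting $\sum_{i=1}^n\frac1{p_i}+\frac1{q'}>1$. The only cosmetic difference is that the paper obtains the necessity of \eqref{1.5} directly by testing \eqref{1.4} on $f_i=1_{R}\sg_i$, while you route it through the necessity direction of Theorem~\ref{thm1.1}; the computations coincide.
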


In the last section 
we apply Corollary \ref{cor1.2} to strong fractional integral operator. 
Two-weight estimates 
for multilinear fractional strong maximal operator 
and for strong fractional integral operator 
see \cite{CXY,KM,SW}.

The letter $C$ will be used for constants
that may change from one occurrence to another.
Constants with subscripts, such as $C_1$, $C_2$, do not change
in different occurrences.
By $A\approx B$ we mean that 
$c^{-1}B\le A\le cB$ 
with some positive finite constant $c$ independent of appropriate quantities. 

\section{Lemmas}\label{sec2}
We need two lemmas and 
we will give their proofs for the sake of completeness. 

\begin{lem}\label{lem2.1}
Given a~weight $\sg$ in $\R^d$ and 
$1<p<q<\8$, 
the following statements are equivalent\text{:}

\begin{itemize}
\item[{\rm(a)}] 
The Carleson type embedding inequality for dyadic cubes 
\begin{equation}\label{2.1}
\sum_{Q\in\cdq(\R^d)}
\sg(Q)^{\frac{q}{p}}
\lt(\frac{1}{\sg(Q)}\int_{Q}f\,{\rm d}\sg\rt)^q
\le c_1
\lt(\int_{\R^d}f^p\,{\rm d}\sg\rt)^{\frac{q}{p}}
\end{equation}
holds for all nonnegative function 
$f\in L^p(\sg)$;
\item[{\rm(b)}] 
The testing condition 
\begin{equation}\label{2.2}
\sum_{
\substack{Q'\in\cdq(\R^d) \\ Q'\subset Q}
}
\sg(Q')^{\frac{q}{p}}
\le c_2
\sg(Q)^{\frac{q}{p}}
\end{equation}
holds for all cubes $Q\in\cdq(\R^d)$.
\end{itemize}

\noindent
Moreover, 
the least possible constants $c_1$ and $c_2$ are equivalent.
\end{lem}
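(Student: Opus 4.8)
The implication (a) $\Rightarrow$ (b) is the trivial one: for a fixed dyadic cube $Q$ I would insert $f=1_{Q}\in L^{p}(\sg)$ into \eqref{2.1}. Every summand on the left is nonnegative and those indexed by dyadic $Q'\subset Q$ already contribute $\sum_{Q'\subset Q}\sg(Q')^{q/p}$, while the right-hand side equals $c_{1}\sg(Q)^{q/p}$; hence \eqref{2.2} holds with $c_{2}\le c_{1}$. The substance is the converse, which I would prove by a corona (stopping-time) decomposition. A routine monotone-convergence/exhaustion argument (writing $\R^{d}$ as the increasing union of the cubes $[-2^{j},2^{j})^{d}$, each a disjoint union of $2^{d}$ dyadic cubes) reduces matters to proving \eqref{2.1}, with a constant depending only on $p$, $q$ and $c_{2}$, for a nonnegative $f$ supported in a fixed dyadic cube $Q_{0}$ and with the sum restricted to dyadic $Q\subset Q_{0}$.

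So fix such an $f$, set $N:=\|f\|_{L^{p}(\sg)}$ and $\langle f\rangle_{Q}:=\frac{1}{\sg(Q)}\int_{Q}f\,{\rm d}\sg$, and build a stopping family $\cS$ with top $Q_{0}$: recursively, the $\cS$-children of $S\in\cS$ are the maximal dyadic $Q\subsetneq S$ with $\langle f\rangle_{Q}>2\langle f\rangle_{S}$. Write $\cQ(S)$ for the corona of $S$ (the dyadic $Q\subset S$ with no member of $\cS$ strictly between $Q$ and $S$) and $E_{S}:=S\setminus\bigcup\{S'\in\cS:S'\text{ an }\cS\text{-child of }S\}$. The structural facts I would use are standard: $\langle f\rangle_{Q}\le2\langle f\rangle_{S}$ for $Q\in\cQ(S)$; summing the defining inequality over the disjoint $\cS$-children $S'$ of $S$ gives $\sum_{S'}\sg(S')\le\tfrac12\sg(S)$, so that $\sg(E_{S})\ge\tfrac12\sg(S)$ and the sets $E_{S}$, $S\in\cS$, are pairwise disjoint; and $\langle f\rangle_{S}\le M_{\sg}f(x)$ for every $x\in S$, where $M_{\sg}g(x):=\sup_{Q\in\cdq(\R^{d}),\,x\in Q}\frac{1}{\sg(Q)}\int_{Q}|g|\,{\rm d}\sg$ is the dyadic maximal operator associated with $\sg$. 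Breaking the sum in \eqref{2.1} over the coronas, using $\langle f\rangle_{Q}\le2\langle f\rangle_{S}$ on each $\cQ(S)$, and then the testing condition \eqref{2.2} with $Q=S$,
\[
\sum_{\substack{Q\in\cdq(\R^{d})\\ Q\subset Q_{0}}}\sg(Q)^{\frac{q}{p}}\langle f\rangle_{Q}^{q}
\le 2^{q}\sum_{S\in\cS}\langle f\rangle_{S}^{q}\sum_{\substack{Q\in\cdq(\R^{d})\\ Q\subset S}}\sg(Q)^{\frac{q}{p}}
\le 2^{q}c_{2}\sum_{S\in\cS}\bigl(\langle f\rangle_{S}^{p}\,\sg(S)\bigr)^{\frac{q}{p}} .
\]

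It remains to estimate $\sum_{S\in\cS}b_{S}^{q/p}$ with $b_{S}:=\langle f\rangle_{S}^{p}\,\sg(S)$. By H\"older's inequality, $b_{S}=\sg(S)^{1-p}\bigl(\int_{S}f\,{\rm d}\sg\bigr)^{p}\le\int_{S}f^{p}\,{\rm d}\sg\le N^{p}$, so $\sup_{S\in\cS}b_{S}\le N^{p}$; and $b_{S}\le2\langle f\rangle_{S}^{p}\,\sg(E_{S})\le2\int_{E_{S}}(M_{\sg}f)^{p}\,{\rm d}\sg$, so by the disjointness of the $E_{S}$ and the $L^{p}(\sg)$-boundedness of $M_{\sg}$ (which holds for every weight, since $p>1$, by the $\sg$-weak type $(1,1)$ bound together with interpolation), $\sum_{S\in\cS}b_{S}\le C_{p}N^{p}$. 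Because $q/p>1$ — the one point where the hypothesis $p<q$ is used — it follows that $\sum_{S\in\cS}b_{S}^{q/p}\le\bigl(\sup_{S\in\cS}b_{S}\bigr)^{\frac{q}{p}-1}\sum_{S\in\cS}b_{S}\le(N^{p})^{\frac{q}{p}-1}C_{p}N^{p}=C_{p}N^{q}$. Inserting this into the previous display proves \eqref{2.1} for $Q\subset Q_{0}$ with a constant depending only on $p$, $q$, $c_{2}$, hence, after the exhaustion, \eqref{2.1} in general with $c_{1}\le C c_{2}$; combined with (a) $\Rightarrow$ (b) this gives the equivalence and $c_{1}\approx c_{2}$. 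I expect the only real difficulty to be organizational — carrying out the corona construction and verifying its properties, and making the reduction to a single top cube clean — rather than analytic: the decisive structural input is merely that $p<q$ allows the $\ell^{1}$-summability of $\{b_{S}\}$ to be upgraded to the $\ell^{q/p}$-bound that the left-hand side of \eqref{2.1} demands.
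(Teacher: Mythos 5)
Your proposal is correct and follows essentially the same route as the paper: a stopping-time (principal cubes) decomposition with threshold $2$, the testing condition \eqref{2.2} applied corona by corona, the disjoint sets $E_{S}$ with $\sg(E_{S})\ge\tfrac12\sg(S)$, and the $L^{p}(\sg)$-boundedness of the dyadic maximal operator. The only cosmetic differences are that you make the exhaustion to a top cube explicit and split the $\ell^{q/p}$ estimate into a sup bound times an $\ell^{1}$ bound, where the paper simply invokes $\|\cdot\|_{\ell^{q/p}}\le\|\cdot\|_{\ell^{1}}$.
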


\begin{proof}
The necessity \eqref{2.2} follows at once 
if we substitute the test function 
$f=1_{Q}$ into inequality \eqref{2.1}. 
To show that inequality \eqref{2.2} is sufficient, 
we fix a (big enough) dyadic cube $Q_0\in\cD(\R^d)$ 
and we prove the inequality 
\begin{equation}\label{2.3}
\sum_{
\substack{Q\in\cdq(\R^d) \\ Q\subset Q_0}
}
\sg(Q)^{\frac{q}{p}}
\lt(\frac{1}{\sg(Q)}\int_{Q}f\,{\rm d}\sg\rt)^q
\le C c_2
\lt(\int_{Q_0}f^p\,{\rm d}\sg\rt)^{\frac{q}{p}}.
\end{equation}

We define the collection of principal cubes $\cF$ 
for the pair $(f,\sg)$. Namely, 
\[
\cF:=\bigcup_{k=0}^{\8}\cF_k,
\]
where 
$\cF_0:=\{Q_0\}$,
\[
\cF_{k+1}
:=
\bigcup_{F\in\cF_k}\ch_{\cF}(F)
\]
and $\ch_{\cF}(F)$ is defined by 
the set of all maximal dyadic cubes $Q\subset F$ such that 
\[
\frac{1}{\sg(Q)}\int_{Q}f\,{\rm d}\sg
>
\frac{2}{\sg(F)}\int_{F}f\,{\rm d}\sg.
\]
Observe that
\begin{align*}
&
\sum_{F'\in\ch_{\cF}(F)}\sg(F')
\\ &\le
\lt(\frac{2}{\sg(F)}\int_{F}f\,{\rm d}\sg\rt)^{-1}
\sum_{F'\in \ch_{\cF}(F)}
\int_{F'}f\,{\rm d}\sg
\\ &\le
\lt(\frac{2}{\sg(F)}\int_{F}f\,{\rm d}\sg\rt)^{-1}
\int_{F}f\,{\rm d}\sg
=
\frac{\sg(F)}{2},
\end{align*}
and, hence, 
\begin{equation}\label{2.4}
\sg(E_{\cF}(F))
:=
\sg\lt(F\setminus\bigcup_{F'\in \ch_{\cF}(F)}F'\rt)
\ge
\frac{\sg(F)}{2},
\end{equation}
where the sets in the collection 
$\{E_{\cF}(F):\,F\in\cF\}$ 
are pairwise disjoint. 

We further define the stopping parent, 
for $Q\in\cdq(\R^d)$, 
\[
\pi_{\cF}(Q)
:=
\min\{F\supset Q:\,F\in\cF\}.
\]

Then we can rewrite the series in \eqref{2.3} as follows:
\begin{align*}
&
\sum_{Q\subset Q_0}
\sg(Q)^{\frac{q}{p}}
\lt(\frac{1}{\sg(Q)}\int_{Q}f\,{\rm d}\sg\rt)^q
\\ &=
\sum_{F\in\cF}
\sum_{Q:\,\pi_{\cF}(Q)=F}
\sg(Q)^{\frac{q}{p}}
\lt(\frac{1}{\sg(Q)}\int_{Q}f\,{\rm d}\sg\rt)^q
\\ &\le 
\sum_{F\in\cF}
\lt(\frac{2}{\sg(F)}\int_{F}f\,{\rm d}\sg\rt)^q
\sum_{Q:\,\pi_{\cF}(Q)=F}
\sg(Q)^{\frac{q}{p}}
\\ &\le 2^q c_2
\sum_{F\in\cF}
\lt(\frac{1}{\sg(F)}\int_{F}f\,{\rm d}\sg\rt)^q
\sg(F)^{\frac{q}{p}},
\end{align*}
where we have used the condition \eqref{2.2}.

Using 
$\|\cdot\|_{l^p}\ge\|\cdot\|_{l^q}$, 
for $0<p\le q<\8$, and 
\eqref{2.4} we can proceed further that 
\begin{align*}
&\le C c_2
\lt\{
\sum_{F\in\cF}
\lt(\frac{1}{\sg(F)}\int_{F}f\,{\rm d}\sg\rt)^p
\sg(F)
\rt\}^{\frac{q}{p}}
\\ &\le C c_2
\lt\{
\sum_{F\in\cF}
\lt(\frac{1}{\sg(F)}\int_{F}f\,{\rm d}\sg\rt)^p
\sg(E_{\cF}(F))
\rt\}^{\frac{q}{p}}
\\ &\le C c_2
\lt(\int_{Q_0}M_{\cdq}^{\sg}[f1_{Q_0}]^p\,{\rm d}\sg\rt)^{\frac{q}{p}}
\\ &\le C c_2
\lt(\int_{Q_0}f^p\,{\rm d}\sg\rt)^{\frac{q}{p}},
\end{align*}
where 
$M_{\cdq}^{\sg}$ stands for 
the dyadic Hardy-Littlewood maximal operator 
with respect to the measure ${\rm d}\sg$ and 
we have used its boundedness. 
This completes the proof. 
\end{proof}

We denote by $P_i$, $i=1,\ldots,d$, 
the projection on the $x_j$-axis. 
For 
the dyadic rectangle $R\in\cdr(\R^d)$, 
the dyadic interval $I\in\cdq(\R)$ 
and $j=1,\ldots,d$,
we define the dyadic rectangle 
\[
[R;\,I,j]
:=
\lt(\prod_{i=1}^{j-1}P_i(R)\rt)
\times I\times
\lt(\prod_{i=j+1}^dP_i(R)\rt).
\]

\begin{lem}\label{lem2.2}
Given a~weight $\sg$ in $\R^d$ and 
$1<p<q<\8$, 
the following statements are equivalent\text{:}

\begin{itemize}
\item[{\rm(a)}] 
The Carleson type embedding inequality for rectangles 
\begin{equation}\label{2.5}
\sum_{R\in\cdr(\R^d)}
\sg(R)^{\frac{q}{p}}
\lt(\frac{1}{\sg(R)}\int_{R}f\,{\rm d}\sg\rt)^q
\le c_1
\lt(\int_{\R^d}f^p\,{\rm d}\sg\rt)^{\frac{q}{p}}
\end{equation}
holds for all nonnegative function 
$f\in L^p(\sg)$;
\item[{\rm(b)}] 
The testing condition 
\begin{equation}\label{2.6}
\sum_{
\substack{I\in\cdq(\R) \\ I\subset P_j(R)}
}
\sg([R;\,I,j])^{\frac{q}{p}}
\le c_2
\sg(R)^{\frac{q}{p}}
\end{equation}
holds for all dyadic rectangles $R\in\cdr(\R^d)$ 
and $j=1,\ldots,d$.
\end{itemize}

\noindent
Moreover, 
the least possible constants $c_1$ and $c_2$ 
enjoy $c_1\le C c_2^d$ 
and $c_2\le c_1$.
\end{lem}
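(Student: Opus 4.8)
The necessity $c_2\le c_1$ is immediate: substituting the test function $f=1_R$ into \eqref{2.5} and keeping only the terms indexed by $[R;\,I,j]$ with $I\subset P_j(R)$ — for which $\frac{1}{\sg([R;\,I,j])}\int_{[R;\,I,j]}1_R\,{\rm d}\sg=1$ — yields $\sum_{I\subset P_j(R)}\sg([R;\,I,j])^{q/p}\le c_1\sg(R)^{q/p}$, which is \eqref{2.6}. The substance of the lemma is the bound $c_1\le Cc_2^d$, and I would prove it by induction on the dimension $d$. The base case $d=1$ is exactly Lemma \ref{lem2.1}, since then $\cdr(\R)=\cdq(\R)$ and \eqref{2.6} is \eqref{2.2}.

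For the inductive step write $x=(t,y)$ with $t\in\R$, $y\in\R^{d-1}$, and a dyadic rectangle as $R=I\times S$ with $I\in\cdq(\R)$ and $S\in\cdr(\R^{d-1})$. Fix $S$ and introduce the weight $\rho_S$ on $\R$ with density $h_S(t):=\int_S\sg(t,y)\,{\rm d}y$, together with the slice average $F_S(t):=\frac{1}{h_S(t)}\int_S f(t,y)\sg(t,y)\,{\rm d}y$; then $\sg(I\times S)=\rho_S(I)$ and, by Fubini, $\int_{I\times S}f\,{\rm d}\sg=\int_I F_S\,{\rm d}\rho_S$. Since $[I\times S;\,I',1]=I'\times S$, hypothesis \eqref{2.6} in the direction $j=1$ says precisely that $\rho_S$ obeys the one-dimensional testing condition \eqref{2.2} with constant $\le c_2$; summing the identity above over $I$ and applying Lemma \ref{lem2.1},
\[
\sum_{R\in\cdr(\R^d)}\sg(R)^{\frac{q}{p}}\lt(\frac{1}{\sg(R)}\int_R f\,{\rm d}\sg\rt)^q
\le Cc_2\sum_{S\in\cdr(\R^{d-1})}\lt(\int_{\R}F_S(t)^p h_S(t)\,{\rm d}t\rt)^{\frac{q}{p}}.
\]

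The crucial point is how to treat the right-hand side. One must \emph{not} estimate $\int_\R F_S^p h_S\,{\rm d}t$ by Jensen's inequality: that would replace it by $\int_{\R\times S}f^p\,{\rm d}\sg$, and the resulting series $\sum_S(\int_{\R\times S}f^p\,{\rm d}\sg)^{q/p}$ already diverges for compactly supported $f$, so all the decay carried by the ``long'' rectangles is lost. Instead, letting $\sg_t$ denote the weight on $\R^{d-1}$ with density $y\mapsto\sg(t,y)$, one uses the identity $F_S(t)^p h_S(t)=\bigl(\frac{1}{\sg_t(S)}\int_S f(t,\cdot)\,{\rm d}\sg_t\bigr)^p\sg_t(S)$ and invokes Minkowski's integral inequality (legitimate since $q/p\ge1$) in the form $\bigl(\sum_S(\int_\R\phi_S\,{\rm d}t)^{q/p}\bigr)^{p/q}\le\int_\R(\sum_S\phi_S(t)^{q/p})^{p/q}\,{\rm d}t$ to move the $\ell^{q/p}$-summation over $S$ inside the $t$-integral. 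For each fixed $t$ the inner sum $\sum_{S\in\cdr(\R^{d-1})}\bigl(\frac{1}{\sg_t(S)}\int_S f(t,\cdot)\,{\rm d}\sg_t\bigr)^q\sg_t(S)^{q/p}$ is exactly the left side of \eqref{2.5} in dimension $d-1$ for the pair $(f(t,\cdot),\sg_t)$. To run the induction one checks that, for a.e.\ $t$, $\sg_t$ satisfies the $(d-1)$-dimensional condition \eqref{2.6} with constant $\le c_2$: this follows from \eqref{2.6} for $\sg$ upon dividing by $|I|^{q/p}$, letting $I$ shrink dyadically to $\{t\}$, using the Lebesgue differentiation theorem (along dyadic intervals) for the countably many $L^1_{\rm loc}$ functions $t\mapsto\int_{[S;\,I',j]}\sg(t,y)\,{\rm d}y$ and $t\mapsto\int_S\sg(t,y)\,{\rm d}y$, Fatou's lemma for the sum over $I'$, and the identity $[I\times S;\,I',j+1]=I\times[S;\,I',j]$. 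The induction hypothesis then bounds the slice sum by $Cc_2^{\,d-1}\bigl(\int_{\R^{d-1}}f(t,\cdot)^p\,{\rm d}\sg_t\bigr)^{q/p}$; substituting this, integrating the $(p/q)$-th power in $t$ and taking the $(q/p)$-th power gives $\sum_S(\int_\R F_S^p h_S\,{\rm d}t)^{q/p}\le Cc_2^{\,d-1}(\int_{\R^d}f^p\,{\rm d}\sg)^{q/p}$, whence $c_1\le Cc_2^{\,d}$.

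Thus the one real obstacle is the step immediately after the first peeling: a naive use of Jensen's inequality destroys the decay in the long rectangles, and the remedy is to exploit $q/p>1$ through Minkowski's integral inequality so that the lower-dimensional embedding can be applied fibrewise — which in turn forces one to transfer the testing constant from $\sg$ to the fibres $\sg_t$ by a Lebesgue-differentiation argument. Everything else is routine: the Fubini identities; the usual convention that a term with $\sg(R)=0$ is $0$; the finiteness of every quantity involved, which as in the proof of Lemma \ref{lem2.1} may be secured by first restricting the sum to rectangles contained in a large fixed rectangle and then letting its sides tend to $\R$; and the bookkeeping of the constant through the $d$ applications of Lemma \ref{lem2.1}.
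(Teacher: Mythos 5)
Your proof is correct and follows essentially the same route as the paper's: induction on the dimension, a Fubini/averaging decomposition of each dyadic rectangle, Minkowski's integral inequality (valid since $q/p\ge1$) to move the rectangle sum inside the fibre integral, a Lebesgue-differentiation (plus Fatou) argument to transfer the testing constant to a.e.\ slice, and applications of Lemma \ref{lem2.1} together with the induction hypothesis. The only immaterial difference is the order: you average over the $(d-1)$-dimensional block $S$ and apply the one-dimensional Lemma \ref{lem2.1} first, then the $(d-1)$-dimensional hypothesis on the slices $\sg_t$, whereas the paper averages over the last coordinate $I_d$, applies the $(d-1)$-dimensional hypothesis first, and then Lemma \ref{lem2.1} on the slices.
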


\begin{proof}
The necessity is clear, so we shall prove the sufficiency. 
We use induction on the dimension $d$. 
To do this, 
we assume that the lemma is true for the case $d-1$. 

We assume the weight $\sg$ in $\R^d$ 
satisfies the testing condition \eqref{2.6} 
($d$-dimensional case).
We will write 
$x=(x_1,\ldots,x_{d-1},x_d)=(\ol{x},x_d)$.

We need two observations. 
First, we verify that, 
for any dyadic interval $I_d\in\cdq(\R)$, 
if we let 
\[
v_{I_d}(\ol{x})
:=
\int_{I_d}\sg(\ol{x},x_d)\,{\rm d}x_d,
\]
then $v_{I_d}(\ol{x})$ satisfies 
the testing condition \eqref{2.6} 
($(d-1)$-dimensional case).
Indeed, 
for any $\ol{R}\in\cdr(\R^{d-1})$, 
setting $R=\ol{R}\times I_d$, 
we have that, for $j=1,\ldots,d-1$, 
\begin{align*}
&
\sum_{
\substack{I\in\cdq(\R) \\ I\subset P_j(\ol{R})}
}
v_{I_d}([\ol{R};\,I,j])^{\frac{q}{p}}
&=
\sum_{
\substack{I\in\cdq(\R) \\ I\subset P_j(R)}
}
\sg([R;\,I,j])^{\frac{q}{p}}
\\ &\le c_2
\sg(R)^{\frac{q}{p}}
=c_2
v_{I_d}(\ol{R})^{\frac{q}{p}}.
\end{align*}

We next verify that, 
for a.e. $\ol{x}\in\R^{d-1}$, 
if we let 
\[
v_{\ol{x}}(x_d)=\sg(\ol{x},x_d),
\]
then $v_{\ol{x}}(x_d)$ satisfies 
the testing condition \eqref{2.2} 
(one-dimensional case). 
We must prove that the inequality 
\begin{equation}\label{2.7}
\sum_{
\substack{I\in\cdq(\R) \\ I\subset I_d}
}
v_{\ol{x}}(I)^{\frac{q}{p}}
\le c_2
v_{\ol{x}}(I_d)^{\frac{q}{p}}
\end{equation}
holds for any $I_d\in\cdq(\R)$. 
For a~cube $\ol{Q}\in\cdq(\R^{d-1})$, 
it follows by setting 
$R=\ol{Q}\times I_d$ that 
\[
\sum_{
\substack{I\in\cdq(\R) \\ I\subset P_d(R)}
}
\sg([R;\,I,d])^{\frac{q}{p}}
\le c_2
\sg(R)^{\frac{q}{p}}.
\]
Dividing the both sides by the volume $|\ol{Q}|^{\frac{q}{p}}$, 
\[
\sum_{
\substack{I\in\cdq(\R) \\ I\subset P_d(R)}
}
\lt(
\frac{1}{|\ol{Q}|}
\int_{\ol{Q}\times I}
\sg(\ol{x},x_d)\,{\rm d}x_d\,{\rm d}\ol{x}
\rt)^{\frac{q}{p}}
\le c_2
\lt(
\frac{1}{|\ol{Q}|}
\int_{\ol{Q}\times I_d}
\sg(\ol{x},x_d)\,{\rm d}x_d\,{\rm d}\ol{x}
\rt)^{\frac{q}{p}}.
\]
In the both sides of this inequality, 
considering the Lebesgue point $\ol{y}$ 
with respect to the integral avarages over $\ol{Q}$, 
which exists a.e. in $\R^{d-1}$ 
because our argument is countable, 
and shrinking $\ol{Q}$ to $\ol{y}$, 
we obtain 
\[
\sum_{
\substack{I\in\cdq(\R) \\ I\subset I_d}
}
\lt(
\int_{I}\sg(\ol{y},x_d)\,{\rm d}x_d
\rt)^{\frac{q}{p}}
\le c_2
\lt(
\int_{I_d}\sg(\ol{y},x_d)\,{\rm d}x_d
\rt)^{\frac{q}{p}},
\]
which means \eqref{2.7}. 

By the use of these two observations 
we can prove the lemma. 

Fix a~nonnegative function $f\in L^p(\sg)$. 
We shall evaluate 
\[
\text{(i)}
:=
\sum_{I_d\in\cdq(\R)}
\sum_{\ol{R}\in\cdr(\R^{d-1})}
\sg(R)^{\frac{q}{p}}
\lt(
\frac{1}{\sg(R)}
\int_{R}f\,{\rm d}\sg
\rt)^q,
\]
where we have used $R=\ol{R}\times I_d$. 

There holds 
\begin{align*}
\text{(i)}
&=
\sum_{I_d\in\cdq(\R)}
\sum_{\ol{R}\in\cdr(\R^{d-1})}
v_{I_d}(\ol{R})^{\frac{q}{p}}
\\ &\quad\times
\lt(
\frac{1}{v_{I_d}(\ol{R})}
\int_{\ol{R}}
\lt(
\int_{I_d}
f(\ol{x},x_d)\sg(\ol{x},x_d)
\,{\rm d}x_d
v_{I_d}(\ol{x})^{-1}
\rt)
v_{I_d}(\ol{x})
\,{\rm d}\ol{x}
\rt)^q.
\end{align*}
Since $v_{I_d}(\ol{x})$ satisfies 
the testing condition \eqref{2.6} 
($(d-1)$-dimensional case), 
by our induction assumption, 
we have that 
\begin{align*}
&\le C c_2^{d-1}
\sum_{I_d\in\cdq(\R)}
\lt(
\int_{\R^{d-1}}
\lt(
\int_{I_d}
f(\ol{x},x_d)\sg(\ol{x},x_d)
\,{\rm d}x_d
v_{I_d}(\ol{x})^{-1}
\rt)^p
v_{I_d}(\ol{x})
\,{\rm d}\ol{x}
\rt)^{\frac{q}{p}}
\\ &= C c_2^{d-1}
\lt[\lt\{
\cdots\cdots
\rt\}^{\frac{p}{q}}
\rt]^{\frac{q}{p}}.
\end{align*}
By integral version of Minkowski's inequality, 
\begin{align*}
&\lt\{
\sum_{I_d\in\cdq(\R)}
\lt(
\int_{\R^{d-1}}
\lt(
\int_{I_d}
f(\ol{x},x_d)\sg(\ol{x},x_d)
\,{\rm d}x_d
v_{I_d}(\ol{x})^{-1}
\rt)^p
v_{I_d}(\ol{x})
\,{\rm d}\ol{x}
\rt)^{\frac{q}{p}}
\rt\}^{\frac{p}{q}}
\\ &\le
\int_{\R^{d-1}}
\lt\{
\sum_{I_d\in\cdq(\R)}
\lt(
\int_{I_d}
f(\ol{x},x_d)\sg(\ol{x},x_d)
\,{\rm d}x_d
v_{I_d}(\ol{x})^{-1}
\rt)^q
v_{I_d}(\ol{x})^{\frac{q}{p}}
\rt\}^{\frac{p}{q}}
\,{\rm d}\ol{x}
\\ &=
\int_{\R^{d-1}}
\lt\{
\sum_{I_d\in\cdq(\R)}
v_{\ol{x}}(I_d)^{\frac{q}{p}}
\lt(
\frac{1}{v_{\ol{x}}(I_d)}
\int_{I_d}
f(\ol{x},x_d)v_{\ol{x}}(x_d)
\,{\rm d}x_d
\rt)^q
\rt\}^{\frac{p}{q}}
\,{\rm d}\ol{x}.
\end{align*}
Since $v_{\ol{x}}(x_d)$ satisfies 
\eqref{2.2} (one-dimensional case), 
by Lemma \ref{lem2.1} 
\begin{align*}
&\le c_2^{\frac{p}{q}}
\int_{\R^{d-1}}\int_{\R}
f(\ol{x},x_d)^p\sg(\ol{x},x_d)
\,{\rm d}x_d\,{\rm d}\ol{x}
\\ &=c_2^{\frac{p}{q}}
\int_{\R^d}f^p\,{\rm d}\sg.
\end{align*}
Altogether, we obtain 
\[
\text{(i)}
\le C c_2^d
\lt(\int_{\R^d}f^p\,{\rm d}\sg\rt)^{\frac{q}{p}}.
\]
This proves the lemma. 
\end{proof}

\section{Proof of Theorem \ref{thm1.1}}\label{sec3}
In what follows we shall prove Theorem \ref{thm1.1}. 

We first notice that, 
if $\sg$ is a~reverse doubling weight on $\R^d$ with $\bt>1$, 
then it satisfies the testing condition \eqref{2.6}. 
Indeed, 
for the dyadic rectangles $R\in\cdr(\R^d)$ 
and $j=1,\ldots,d$,
we have that 
\begin{align*}
\sum_{\substack{
I\in\cdq(\R) \\ I\subset P_j(R)
}}
\sg([R;\,I,j])^{\frac{q}{p}}
&=
\sum_{k=0}^{\8}
\sum_{\substack{
I\in\cdq(\R) \\ I\subset P_j(R),
\,|I|=2^{-k}|P_j(R)|
}}
\sg([R;\,I,j])^{\frac{q}{p}-1}
\sg([R;\,I,j])
\\ &\le
\sum_{k=0}^{\8}
\lt(\frac{1}{\bt^k}\rt)^{\frac{q}{p}-1}
\sg(R)^{\frac{q}{p}-1}
\sum_{\substack{
I\in\cdq(\R) \\ I\subset P_j(R),
\,|I|=2^{-k}|P_j(R)|
}}
\sg([R;\,I,j])
\\ &=
\sg(R)^{\frac{q}{p}}
\sum_{k=0}^{\8}
\lt(\frac{1}{\bt^k}\rt)^{\frac{q}{p}-1}
\\ &= C
\sg(R)^{\frac{q}{p}}.
\end{align*}

The necessity \eqref{1.3} follows at once 
if we substitute the test functions 
$f_i=1_{R}$, $i=1,\ldots,n$, 
into inequality \eqref{1.2}. 
To show that inequality \eqref{1.3} is sufficient, 
we take $q_i>p_i$, $i=1,\ldots,n$, 
with $\sum_{i=1}^n\frac{1}{q_i}=1$. 
This is possible because 
$\sum_{i=1}^n\frac{1}{p_i}>1$.
It follows from testing condition \eqref{1.3} and 
H\"{o}lder's inequality that 
\begin{align*}
&\sum_{R\in\cdr(\R^d)}
K(R)\prod_{i=1}^n\lt|\int_{R}f_i\,{\rm d}\sg_i\rt|
\\ &\le c_2
\sum_{R\in\cdr(\R^d)}
\prod_{i=1}^n
\sg_i(R)^{\frac{1}{p_i}}
\lt(\frac{1}{\sg_i(R)}\int_{R}|f_i|\,{\rm d}\sg_i\rt)
\\ &\le c_2
\prod_{i=1}^n
\lt(
\sum_{R\in\cdr(\R^d)}
\sg_i(R)^{\frac{q_i}{p_i}}
\lt(\frac{1}{\sg_i(R)}\int_{R}|f_i|\,{\rm d}\sg_i\rt)^{q_i}
\rt)^{\frac{1}{q_i}}
\\ &\le C c_2
\prod_{i=1}^n
\|f_i\|_{L^{p_i}(\sg_i)},
\end{align*}
where we have used Lemma \ref{lem2.2} 
by noticing every $\sg_i$ satisfies 
the testing condition \eqref{2.6}.
This completes the proof. 

\section{Proof of Corollary \ref{cor1.2}}\label{sec4}
In what follows we shall prove Corollary \ref{cor1.2}. 

The necessity \eqref{1.5} follows at once 
if we substitute the test functions 
$f_i=1_{R}\sg_i$, $i=1,\ldots,n$, 
into inequality \eqref{1.4}. 
To show that inequality \eqref{1.5} is sufficient, 
we notice that the condition 
\[
\sum_{i=1}^n\frac{1}{p_i}>\frac1q
\]
leads the condition 
\[
\frac{1}{q'}+\sum_{i=1}^n\frac{1}{p_i}>1.
\]
By Theorem \ref{thm1.1} 
we have that the inequality 
\begin{equation}\label{4.1}
\sum_{R\in\cdr(\R^d)}
K(R)\int_{R}g\,{\rm d}\om
\prod_{i=1}^n\int_{R}f_i\,{\rm d}\sg_i
\le C
\|g\|_{L^{q'}(\om)}
\prod_{i=1}^n
\|f_i\|_{L^{p_i}(\sg_i)}
\end{equation}
holds for all nonnegative functions 
$g\in L^{q'}(\om)$ 
and 
$f_i\in L^{p_i}(\sg_i)$, 
provided that the testing condition 
\begin{equation}\label{4.2}
K(R)\om(R)
\prod_{i=1}^n\sg_i(R)
\le C
\om(R)^{\frac{1}{q'}}
\prod_{i=1}^n\sg_i(R)^{\frac{1}{p_i}}
\end{equation}
holds for all dyadic rectangles $R\in\cdr(\R^d)$.

Since \eqref{4.2} is equivalent to our assumption \eqref{1.5}, 
the inequality \eqref{4.1} is proper. 
Rewrite 
$f_i\sg_i=h_i$ 
in \eqref{4.1}, then 
\[
\sum_{R\in\cdr(\R^d)}
K(R)\int_{R}g\,{\rm d}\om
\prod_{i=1}^n\int_{R}h_i\,{\rm d}x
\le C c_2
\|g\|_{L^{q'}(\om)}
\prod_{i=1}^n
\|h_i\|_{L^{p_i}(\sg_i^{1-p_i})}.
\]
This means that 
\[
\int_{\R^d}
gT_{K}(h_1,\ldots,h_n)
\,{\rm d}\om
\le C c_2
\|g\|_{L^{q'}(\om)}
\prod_{i=1}^n
\|h_i\|_{L^{p_i}(\sg_i^{1-p_i})}
\]
and, by duality, 
\[
\|T_{K}(h_1,\ldots,h_n)\|_{L^q(\om)}
\le C c_2
\prod_{i=1}^n
\|h_i\|_{L^{p_i}(\sg_i^{1-p_i})},
\]
which yields the proof. 

\section{Remarks}\label{sec5}
In what follows we give some remarks for strong fractional integral operator. 

For a~number $c>0$ and a~rectangle $R\in\cR$, 
we will use $cR$ to denote the rectangle 
with the same center as $R$ but 
with $c$ times the side-lengths of $R$. 
Let $f_i$, $i=1,\ldots,n$, be locally integrable functions on $R^d$. 
The multilinear strong fractional integral operator 
$I_{\al}(f_1,\ldots,f_n)(x)$, 
$0<\al<dn$ and $x\in\R^d$, 
is given by 
\[
I_{\al}(f_1,\ldots,f_n)(x)
:=
\int_{y_1,\ldots,y_n\in\R^d}
\frac
{f_1(y_1)\cdots f_n(y_n)\,{\rm d}y_1\cdots{\rm d}y_n}
{\lt(\prod_{j=1}^d\max_{i=1}^n|P_j(x)-P_j(y_i)|\rt)^{n-\frac{\al}{d}}},
\]
where $P_j(x)$, $j=1,\ldots,d$, 
is the projection on the $x_j$-axis 
of the point $x\in\R^d$. 

We observe that, 
for $s,t\in\R$ with $s\neq t$, 
the minimal dyadic interval $I\in\cdq(\R)$ 
such that $I\ni s$ and $3I\ni t$ 
satisfies 
\[
\frac{|I|}{2}<|s-t|<2|I|.
\]
This observation and a~calculus of geometric series 
enable us that, 
for any $y_1,\ldots,y_n\neq x$, 
\[
\sum_{R\in\cdr(\R^d)}
|R|^{\frac{\al}{d}-n}
1_{R}(x)
\prod_{i=1}^n1_{3R}(y_i)
\approx
\lt(\prod_{j=1}^d\max_{i=1}^n|P_j(x)-P_j(y_i)|\rt)^{\frac{\al}{d}-n}.
\]
This equation and Fubini's theorem yield 
the precise point-wise relation 
\begin{equation}\label{5.1}
I_{\al}(f_1,\ldots,f_n)(x)
\approx
\sum_{R\in\cdr(\R^d)}
|R|^{\frac{\al}{d}-n}
1_{R}(x)
\prod_{i=1}^n\int_{3R}f_i(y_i)\,{\rm d}y_i,
\quad x\in\R^d.
\end{equation}
Since the right-hand of \eqref{5.1} can be controlled by 
the estimate based upon the finite number of the systems of dyadic rectangles
(see, for example, \cite{LPW}), 
by Corollary \ref{cor1.2}, 
we have the following. 

\begin{prop}\label{prp5.1}
Let $1<p_i<\8$, $1<q<\8$ and 
$\sum_{i=1}^n\frac{1}{p_i}>\frac1q$. 
Let $0<\al<dn$ and 
let $\sg_i$, $i=1,\ldots,n$, 
and $\om$ be reverse doubling weights on $\R^d$. 
The following statements are equivalent\text{:}

\begin{itemize}
\item[{\rm(a)}] 
The weighted norm inequality for multilinear strong fractional integral operator 
\[
\|I_{\al}(f_1,\ldots,f_n)\|_{L^q(\om)}
\le c_1
\prod_{i=1}^n
\|f_i\|_{L^{p_i}(\sg_i^{1-p_i})}
\]
holds for all 
$f_i\in L^{p_i}(\sg_i^{1-p_i})$, 
$i=1,\ldots,n$;
\item[{\rm(b)}] 
The testing condition 
\[
|R|^{\frac{\al}{d}-n}
\om(R)^{\frac1q}
\prod_{i=1}^n\sg_i(R)
\le c_2
\prod_{i=1}^n\sg_i(R)^{\frac{1}{p_i}}
\]
holds for all rectangles $R\in\cR(\R^d)$.
\end{itemize}

\noindent
Moreover,
the least possible constants $c_1$ and $c_2$ are equivalent.
\end{prop}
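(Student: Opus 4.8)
The plan is to deduce Proposition~\ref{prp5.1} from Corollary~\ref{cor1.2} by means of the pointwise relation \eqref{5.1}, treating the two implications separately. For the necessity of (b), fix a rectangle $R\in\cR(\R^d)$ and test the inequality in (a) against $f_i=1_{R}\sg_i$, $i=1,\ldots,n$, which belong to $L^{p_i}(\sg_i^{1-p_i})$ because $\sg_i$ is locally integrable. If $x\in R$ and $y_i\in R$, then for every $j$ the quantity $\max_{i}|P_j(x)-P_j(y_i)|$ does not exceed the length of the $j$-th edge of $R$, so the kernel of $I_{\al}$ is at least $|R|^{\frac{\al}{d}-n}$ on $R\times\cdots\times R$; hence $I_{\al}(1_{R}\sg_1,\ldots,1_{R}\sg_n)(x)\ge|R|^{\frac{\al}{d}-n}\prod_{i=1}^n\sg_i(R)$ for every $x\in R$, whence $\|I_{\al}(1_{R}\sg_1,\ldots,1_{R}\sg_n)\|_{L^q(\om)}\ge|R|^{\frac{\al}{d}-n}\om(R)^{1/q}\prod_{i=1}^n\sg_i(R)$. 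Combined with $\|1_{R}\sg_i\|_{L^{p_i}(\sg_i^{1-p_i})}=\sg_i(R)^{1/p_i}$, the inequality in (a) gives exactly (b), with $c_2\le c_1$.

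For the sufficiency, assume (b). Since the kernel of $I_{\al}$ is nonnegative we may assume all $f_i\ge0$, and then by \eqref{5.1} it suffices to bound the $L^q(\om)$-norm of $\sum_{R\in\cdr(\R^d)}|R|^{\frac{\al}{d}-n}1_{R}\prod_{i=1}^n\int_{3R}f_i$. The only obstruction is the dilate $3R$ in place of $R$, and this is removed by the standard adjacent-dyadic-systems device (as in \cite{LPW}): there is a finite family $\cdr^{(1)},\ldots,\cdr^{(N)}$ of dyadic systems of rectangles, with $N=N(d)$, such that every $R\in\cdr(\R^d)$ admits, for some $m=m(R)$, a rectangle $\wht R\in\cdr^{(m)}$ with $3R\subset\wht R$ and with each edge of $\wht R$ at most $C_d$ times the corresponding edge of $R$; in particular $R\subset\wht R$, and for each fixed $m$ and each $R'\in\cdr^{(m)}$ the number of $R$ with $\wht R=R'$ is bounded by a constant depending only on $d$. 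Since $f_i\ge0$ we have $1_{R}\le1_{\wht R}$ and $\int_{3R}f_i\le\int_{\wht R}f_i$; since $\frac{\al}{d}-n<0$ and $|R|\ge C_d^{-d}|\wht R|$, summing over the boundedly many $R$ that produce a given $R'$ yields
\[
\sum_{R\in\cdr(\R^d)}|R|^{\frac{\al}{d}-n}1_{R}(x)\prod_{i=1}^n\int_{3R}f_i
\le C\sum_{m=1}^{N}\ \sum_{R'\in\cdr^{(m)}}|R'|^{\frac{\al}{d}-n}1_{R'}(x)\prod_{i=1}^n\int_{R'}f_i .
\]

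Each inner sum is $T_{K_m}(f_1,\ldots,f_n)$ relative to the system $\cdr^{(m)}$, with $K_m(R')=C|R'|^{\frac{\al}{d}-n}$. Corollary~\ref{cor1.2} was stated for the standard system $\cdr(\R^d)$, but its proof --- via Theorem~\ref{thm1.1} and Lemmas~\ref{lem2.1} and \ref{lem2.2} --- uses only the nested tree structure of the family together with the reverse doubling hypothesis, and the latter is a statement about all rectangles in $\cR(\R^d)$ and hence grid-independent; so Corollary~\ref{cor1.2} applies verbatim to each $\cdr^{(m)}$. The testing condition it requires, $K_m(R')\om(R')^{1/q}\prod_{i=1}^n\sg_i(R')\le Cc_2\prod_{i=1}^n\sg_i(R')^{1/p_i}$ for $R'\in\cdr^{(m)}$, is precisely condition (b) applied to the rectangle $R'$. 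Hence $\|T_{K_m}(f_1,\ldots,f_n)\|_{L^q(\om)}\le Cc_2\prod_{i=1}^n\|f_i\|_{L^{p_i}(\sg_i^{1-p_i})}$ for each of the finitely many $m$; summing these and invoking \eqref{5.1} once more gives (a), with $c_1\le Cc_2$. The main obstacle is the second step: arranging the adjacent systems so that $3R$ is captured by a comparable rectangle of one of them with only boundedly many preimages (the content of \cite{LPW}), and checking that Corollary~\ref{cor1.2} carries over to those systems --- the latter being immediate once one notes that reverse doubling does not refer to any particular grid.
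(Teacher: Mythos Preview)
Your proposal is correct and follows essentially the same route as the paper: for sufficiency you invoke the pointwise equivalence \eqref{5.1}, pass to finitely many shifted dyadic grids (as in \cite{LPW}) to absorb the dilate $3R$, and then apply Corollary~\ref{cor1.2} to each grid, noting that the reverse doubling hypothesis and the testing condition (b) are grid-independent. Your necessity argument, testing with $f_i=1_R\sigma_i$ and bounding the kernel from below on $R\times\cdots\times R$, is the natural one and is implicit in the paper's treatment.
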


Letting 
$\om\equiv\sg_1\equiv\cdots\equiv\sg_n\equiv1$, 
we have the following 
Hardy-Littlewood-Sobolev inequality 
for strong fractional integral operator. 

\begin{prop}\label{prp5.2}
Let $1<q<\8$, $1<p_i<\8$, 
$0<\al<dn$ and 
\[
\frac1q=\sum_{i=1}^n\frac{1}{p_i}-\frac{\al}{d}.
\]
Then the multilinear norm inequality 
\[
\|I_{\al}(f_1,\ldots,f_n)\|_{L^q(\R^d)}
\le C
\prod_{i=1}^n
\|f_i\|_{L^{p_i}(\R^d)}
\]
holds for all 
$f_i\in L^{p_i}(\R^d)$, 
$i=1,\ldots,n$.
\end{prop}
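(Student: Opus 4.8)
The plan is to obtain Proposition \ref{prp5.2} directly from Proposition \ref{prp5.1}, specialized to $\om\equiv\sg_1\equiv\cdots\equiv\sg_n\equiv 1$. First one checks that the constant function $1$ is an admissible weight there: it is nonnegative, locally integrable and positive on a set of positive measure, for $\om\equiv 1$ one has $\om(R)=|R|$ for every rectangle $R\in\cR(\R^d)$, and it is reverse doubling, because the two equal division $R'$ of $R$ satisfies $\om(R')=|R'|=\tfrac12|R|=\tfrac12\om(R)$, so $2\om(R')=\om(R)$ and the constant $\bt=2>1$ works. The same applies to each $\sg_i\equiv 1$; moreover $\sg_i^{1-p_i}\equiv 1$, so $\|f_i\|_{L^{p_i}(\sg_i^{1-p_i})}=\|f_i\|_{L^{p_i}(\R^d)}$ and the right-hand side of Proposition \ref{prp5.1}(a) becomes the one in Proposition \ref{prp5.2}.

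Next one verifies the exponent hypotheses of Proposition \ref{prp5.1}. The requirements $1<p_i<\8$, $1<q<\8$ and $0<\al<dn$ are assumed outright, and the remaining requirement $\sum_{i=1}^n\frac{1}{p_i}>\frac1q$ is forced by the scaling identity $\frac1q=\sum_{i=1}^n\frac{1}{p_i}-\frac{\al}{d}$ together with $\al>0$, which gives $\sum_{i=1}^n\frac{1}{p_i}=\frac1q+\frac{\al}{d}>\frac1q$. It then only remains to rewrite the testing condition (b) of Proposition \ref{prp5.1} in this case. Since $\om(R)=|R|$ and $\sg_i(R)=|R|$ for every rectangle $R$, that condition reads
\[
|R|^{\frac{\al}{d}-n}\,|R|^{\frac1q}\,|R|^{n}
\le c_2\,|R|^{\sum_{i=1}^n\frac{1}{p_i}},
\]
that is, $|R|^{\frac{\al}{d}+\frac1q}\le c_2\,|R|^{\sum_{i=1}^n\frac{1}{p_i}}$ for all $R\in\cR(\R^d)$; by the scaling identity the two exponents coincide, so this holds with $c_2=1$, indeed with equality. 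Invoking the implication (b)$\,\Rightarrow\,$(a) of Proposition \ref{prp5.1} and substituting $\|f_i\|_{L^{p_i}(\sg_i^{1-p_i})}=\|f_i\|_{L^{p_i}(\R^d)}$ now yields precisely the asserted inequality, with a constant $C$ depending only on $d$, $n$, $\al$, $q$ and $p_1,\ldots,p_n$.

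I do not expect any real obstacle here: all of the analytic content---the pointwise identification \eqref{5.1} of $I_\al$ with a strong positive dyadic operator over finitely many dyadic systems, and the $n$-linear embedding theorem underlying Corollary \ref{cor1.2} and hence Proposition \ref{prp5.1}---has already been established. The only points that deserve a moment's care are the homogeneity bookkeeping above, in particular confirming the strict inequality $\sum_{i=1}^n\frac{1}{p_i}>\frac1q$ (not merely $\ge$), and noting that the all-ones weights are reverse doubling so that Proposition \ref{prp5.1} genuinely applies. Alternatively one could unwind \eqref{5.1} and apply Lemma \ref{lem2.2} together with H\"older's inequality directly, but this merely repeats the proof of Theorem \ref{thm1.1} in the unweighted case, so the reduction above is the cleaner route.
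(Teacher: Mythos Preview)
Your proposal is correct and follows exactly the paper's own route: the paper derives Proposition~\ref{prp5.2} simply by setting $\om\equiv\sg_1\equiv\cdots\equiv\sg_n\equiv 1$ in Proposition~\ref{prp5.1}, and your argument just fills in the easy verifications (reverse doubling of the constant weight, the strict inequality $\sum_i 1/p_i>1/q$ from $\al>0$, and the testing condition reducing to the scaling identity) that the paper leaves implicit.
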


\end{document}